\newtheorem{proposition}{\textbf{Proposition}}
\title{Prediction of fitness in bacteria with causal jump dynamic mode decomposition}
\author{Shara Balakrishnan, Aqib Hasnain, Nibodh Boddupalli, Dennis M. Joshy, Robert G. Egbert, and Enoch Yeung %
\thanks{Shara Balakrishnan ({\tt\small sbalakrishnan@ucsb.edu}) is with the Department of Electrical and Computer Engineering, University of California, Santa Barbara \newline \indent Aqib Hasnain, Nibodh Boddupalli and Dennis Joshy are with the Department of Mechanical Engineering, University of California, Santa Barbara, Robert G. Egbert is with the Environmental and Biological Sciences Directorate at the Pacific Northwest National Laboratory, Richland, WA.  \newline \indent Enoch Yeung ({\tt\small eyeung@ucsb.edu}) is with the Department of Mechanical Engineering, Center for Control, Dynamical Systems, and Computation, and Biomolecular Science and Engineering, University of California, Santa Barbara}
}
\begin{document}

\maketitle

%Main body starts

\begin{abstract}
In this paper, we consider the problem of learning a predictive model for population cell growth dynamics as a function of the media conditions. We first introduce a generic data-driven framework for training operator-theoretic models to predict cell growth rate. We then introduce the experimental design and data generated in this study, namely growth curves of {\it Pseudomonas putida} as a function of casein and glucose concentrations. We use a data driven approach for model identification, specifically the nonlinear autoregressive (NAR) model to represent the dynamics. We show theoretically that Hankel DMD can be used to obtain a solution of the NAR model.  We show that it identifies a constrained NAR model and to obtain a more general solution, we define a causal state space system using 1-step, 2-step,..., $\tau$-step predictors  of the NAR model and identify a Koopman operator for this model using extended dynamic mode decomposition.  The hybrid scheme we call causal-jump dynamic mode decomposition, which we illustrate on a growth profile or fitness prediction challenge as a function of different input growth conditions. We show that our model is able to recapitulate training growth curve data with 96.6\% accuracy and predict test growth curve data with 91\% accuracy.

% The novel contributions of this paper are that for nonlinear systems with partial state measurements or output measurements which can be modeled using discrete nonlinear autoregressive (NAR) models, we formulate Hankel DMD as a solution to identify Koopman operators from data, highlight its shortcomings like causality and restricted observables and rectify them by introducing the Causal-Jump DMD algorithm which simultaneously minimizes multiple step prediction errors of the NAR model. We motivate this problem by considering the initial condition microbial cell growth dynamics where we want to identify a predictive model that uses the initial response to predict the growth at any time in the future. We generated multiple growth curves for {\it Pseudomonas putida} with varying glucose and casein substrate conditions in the R2A soil media and compared the performances of various DMD algorithms and showed that Causal-Jump DMD algorithm outperforms the Hankel DMD and extended DMD algorithm.

\end{abstract}

%\begin {IEEEkeywords}
%{\it Vibrio natriegens}, Growth Rate Response, Koopman Operator
%\end{IEEEkeywords}

\section{Introduction}\label{intro}
One of the most fundamental processes in life is the ability to replicate and pass on hereditary material \cite{kornberg1980replication}. From viral particles to bacteria to mammalian cells, cell division is fundamental to growth, maintenance of physiological health, and intrinsically tied to the notion of senescence \cite{mathon2001cell}.  

The mechanisms for controlling growth in organisms are determined by metabolic networks \cite{wu2016metabolic,glazier2015metabolic}, namely their topological structure and parametric realization.  Known metabolic networks in well studied model organisms such as {\it E. coli} \cite{de2016growth} and {\it S. cerevisiae } \cite{sanchez2017improving,zwietering1990modeling} have given rise to predictive models that translate environmental activity to metabolic network state, and ultimately to predictions of growth rate.  For canonical biological model systems, these models have been highly accurate in predicting growth rate and found utility in industrial microbiology applications, e.g. in the design of bioreactors or informing best practices in food safety. 

For many biological life forms, relatively little is known about their metabolic network or structure. This is especially the case when  developing bioengineering tools in novel host microbes \cite{tschirhart2019synthetic,Khan2018broad}. For new organisms, canonical metabolic networks are lacking and often obtained through a process of sequence alignment and comparative analysis with existing metabolic network models in relative species. However, many novel strains do not exhibit significant similarity, and even in the case of sequence similarity, small mutations can lead to dramatically different growth phenotypes, e.g. growth of non-pathogenic soil strains\cite{gill1979effect,gulliver2016comparative} versus pathogenic counterparts \cite{LaBauve2012growth}.  The absence of predictive cross-species models, as well as the inability to predict growth phenotype wholly from sequence data, motivates the need for data-driven methods to accelerate the discovery of metabolic models and growth rate prediction models.  

Due to advances in high-throughput experimental techniques, it is relatively easy to characterize growth rates as a function of exposure to environment.  Liquid  and acoustic-liquid handling robotics enables interrogation of thousands of growth conditions in a single microtiter plate, which in turn opens the door for using data-driven approaches \cite{Palacios2014bayesian} to predict growth rate as a function of environmental state.  Is it possible to accurately predict the growth rate of a microbe, entirely from the chemical composition and environmental parameters of its growth condition?  In this paper we explore a data-driven operator theoretic approach that utilizes microtiter plate reader data, and more generally multi-variate time-series data, to develop predictive models of growth rate in {\it Pseudomonas putida}, a broadly used strain for commercial bioreactors and a target workhorse for tractable genetic engineering \cite{lee2016vibrio}.      

A broadly successful class of data-driven modeling approaches stem from the study of Koopman operators, a mathematical construct for representing the time-evolution of nonlinear dynamical systems.  In Koopman operator theory, the time-evolution of a nonlinear system is defined on a function space, acting on the original state of the system.  In this function space, known the observables space, the Koopman operator is a linear operator, enabling spectral analysis, the decomposition of eigenspaces, and study of nonlinear structure \cite{mezic2005spectral}.   The Koopman operator framework has been developed for continuous \cite{budivsic2012applied} and discrete time systems \cite{williams2015data,rowley2009spectral}, for open-loop \cite{williams2015data} and input-controlled \cite{proctor2016dynamic,williams2016extending} dynamic systems.   Thus, Koopman operators present a powerful framework for analyzing the behavior of nonlinear systems, including predicting how experimental conditions regulate growth dynamics.  

Many numerical methods for identifying Koopman operators from data have been developed in the last two decades \cite{askham2018variable,kaneko2019convolutional,azencot2019consistent,manohar2019optimized,schmid2010dynamic,sinha2019computation,yeung2019learning,hasnain2019data}.  The most common approach is to use dynamic mode decomposition (DMD), which models nonlinear dynamics via an approximate local linear expansion \cite{schmid2010dynamic}.  In \cite{williams2015data} an extended dictionary of basis functions with universal function approximation properties is used to discover an approximation of the lifting map or observables. These techniques suffer from combinatorial explosion, which generally has prohibited analysis of high-dimensional nonlinear systems \cite{Johnson2018class}. The most recent developments in the field of DMD integrate established advances in deep learning with DMD \cite{yeung2019learning,otto2019linearly,takeishi2017learning,li2017extended} where the deep neural networks have the capacity to approximate exponentially many distinct observable functions. Recent work has shown deep Koopman learning algorithms can be extended to synthesize controllers for systems subject to uncertainty \cite{you2018deep}, suggesting that deep Koopman learning can be used broadly for robust controller synthesis.

The existing algorithms assume a full state measurement and construct observables from that. With partial state observables like in biological systems, we construct a state space model for an output nonlinear difference equation model and identify a Koopman operator for that model. In Section \ref{sec:Koop} we briefly introduce the Koopman operator and DMD and the existing literature.  In Section \ref{sec:ExptSetup} we describe the experimental setup for obtaining the growth curve data of \textit{Pseudomonas putida} by adding different concentrations of casein and glucose substrates to the media. In Section \ref{sec: Mathematical Model}, we justify nonlinear autoregressive difference equation model is an appropriate choice for this system. In Section \ref{sec: Hankel DMD}, we formulate the Hankel DMD as a solution of the NAR model and bring out its issues and in Section \ref{sec: NAR-DMD}, we formulate a state space model for the NAR model and use extended dynamic mode decomposition to identify a Koopman operator for the NAR model. In Section \ref{sec: Results}, we show that the algorithm is able to train a predictive Koopman operator, that predicts with 3.4\% on the training data and 9\% on the test data on extended forecasting tasks approximately 500 time steps ahead.

\section{Mathematical Preliminaries} \label{sec:Koop}
Consider a discrete-time autonomous nonlinear dynamical system 
\begin{equation}
\begin{split}
 x_{k+1} &=f(x_k)\
\end{split}
 \label{eq:sys}
\end{equation}
with $f:\mathbb{R}^n\rightarrow \mathbb{R}^n$ is analytic. There exists a Koopman operator \cite{yeung2018koopman} of (\ref{eq:sys}), which acts on a function space $\mathcal{F}$ as $\mathcal{K}$ : $\mathcal{F}$ $\rightarrow$ $\mathcal{F}$. This action can be given by
\begin{equation}
\mathcal{K} \psi(x_k) = \psi \circ f(x_k). 
\label{eq:Koopislinear}
\end{equation}
where the function $\psi: \mathbb{R}^n\rightarrow \mathbb{R}$ is called an \textit{observable} of the system and the set of all observables $\psi \triangleq \{\psi_i\}_{i=1}^{p}, p \leq \infty$ on the system. Here $\mathcal{F}$ is invariant under the action of $\mathcal{K}$. 

The most important property of the Koopman operator that we utilize is the linearity of the operator, in other words,
\begin{equation*}
    \mathcal{K}(\alpha \psi_1 + \beta \psi_2) = \alpha \psi_1 \circ f + \beta \psi_2 \circ f = \alpha \mathcal{K} \psi_1 + \beta \mathcal{K} \psi_2
\end{equation*}
which follows from (\ref{eq:Koopislinear}) since the composition operator is linear. Thus, we have that the Koopman operator of (\ref{eq:sys}) is a linear operator that acts on observable functions $\psi (x_k)$ and propagates them forward in time. 

\subsection{DMD and relevant variants}
The practical identification of Koopman operator for a nonlinear system from input-output data is commonly done using DMD \cite{schmid2010dynamic} or extended DMD \cite{williams2015data} which constructs an approximate Koopman operator $K$. Rowley et. al  showed that the finite-dimensional approximation to the Koopman operator obtained from DMD is closely related to a spectral analysis of the linear but infinite-dimensional Koopman operator \cite{rowley2009spectral}. The approach taken to compute an approximation to the Koopman operator in both DMD and extended DMD is as follows
\begin{equation}
K = \min_{K} || \Psi(X_f) - K\Psi(X_p)|| = \Psi(X_f)\Psi(X_p)^{\dag}
\label{eq:learnKoop}
\end{equation}
where 
$  X_f \equiv \begin{bmatrix} x_{1} & \hdots & x_{N-1} \end{bmatrix},$ $ X_p  \equiv \begin{bmatrix} x_{2} & \hdots & x_{N} \end{bmatrix}
$
are snapshot matrices formed from the discrete-time dynamical system (\ref{eq:sys}),
$
  \Psi(X) \equiv \begin{bmatrix} \psi_1(x) & \hdots & \psi_R(x) \end{bmatrix}
$
is the mapping from physical space into the space of observables and \textsuperscript{$\dag$} denotes the Moore-Penrose pseudoinverse. Here $N$ is the number of snapshots i.e. timepoints. We note that DMD is a special case of extended DMD where $\psi(x) = x$. Throughout the rest of the paper, when we refer to the Koopman operator we mean the finite dimensional approximation to the infinite-dimensional Koopman operator. 

\section{Experimental Setup}\label{sec:ExptSetup}
We describe the procedure adopted to obtain {\it P. putida}'s growth curve for varying concentrations of glucose and casein substrates in the media.

\noindent \textbf{Incubation:} 
We revived {\it P. putida} cryopreserved at $-80^oC$ in 30\% (vol/vol) glycerol stock by suspending a small portion into a polypropylene test tube containing 4 mL of Lysogeny Broth (LB). We cultured it at $30^o C$ spinning with a speed of 200 revolutions per minute (rpm) for 12 hours overnight. A visual inspection of the culture tube resulted in a cloudy culture medium, suggesting subsequent growth with the seed {\it P. putida} culture in a plate reader was feasible.

\noindent \textbf{Solution Preparation:} 
We prepared 300 g/L solution of glucose solution and 225 g/L casein acid hydrolysate solution. Once the bacteria reached a certain optical density(OD), we shifted the culture from LB media to R2A 2x media obtained from Teknova Inc to 2x the required initial OD.   

\noindent \textbf{Serial dilution setup for \textit{P. putida} culture:}
We use a 630 $\mu L$ 96 well plate to create media with different substrate concentrations. Each well of this plate contained 500 $\mu L$ of modified media - 250 $\mu L$ of culture in 2x R2A at 0.4 OD and 120 $\mu L$ containing a mixture of casein and glucose solutions. To vary casein and glucose across the 96 well plate, we perform 2D serial dilution such that the concentration of glucose was halved across columns and concentration of casein is halved across rows as shown in Figure \ref{fig:96well}. Then, the culture was mixed into each well to get a starting OD of 0.2 in 1x R2A media since equal volumes of culture media and substrate solutions were added.

\begin{figure}[h!]
  \centering
  \includegraphics[width=1\columnwidth]{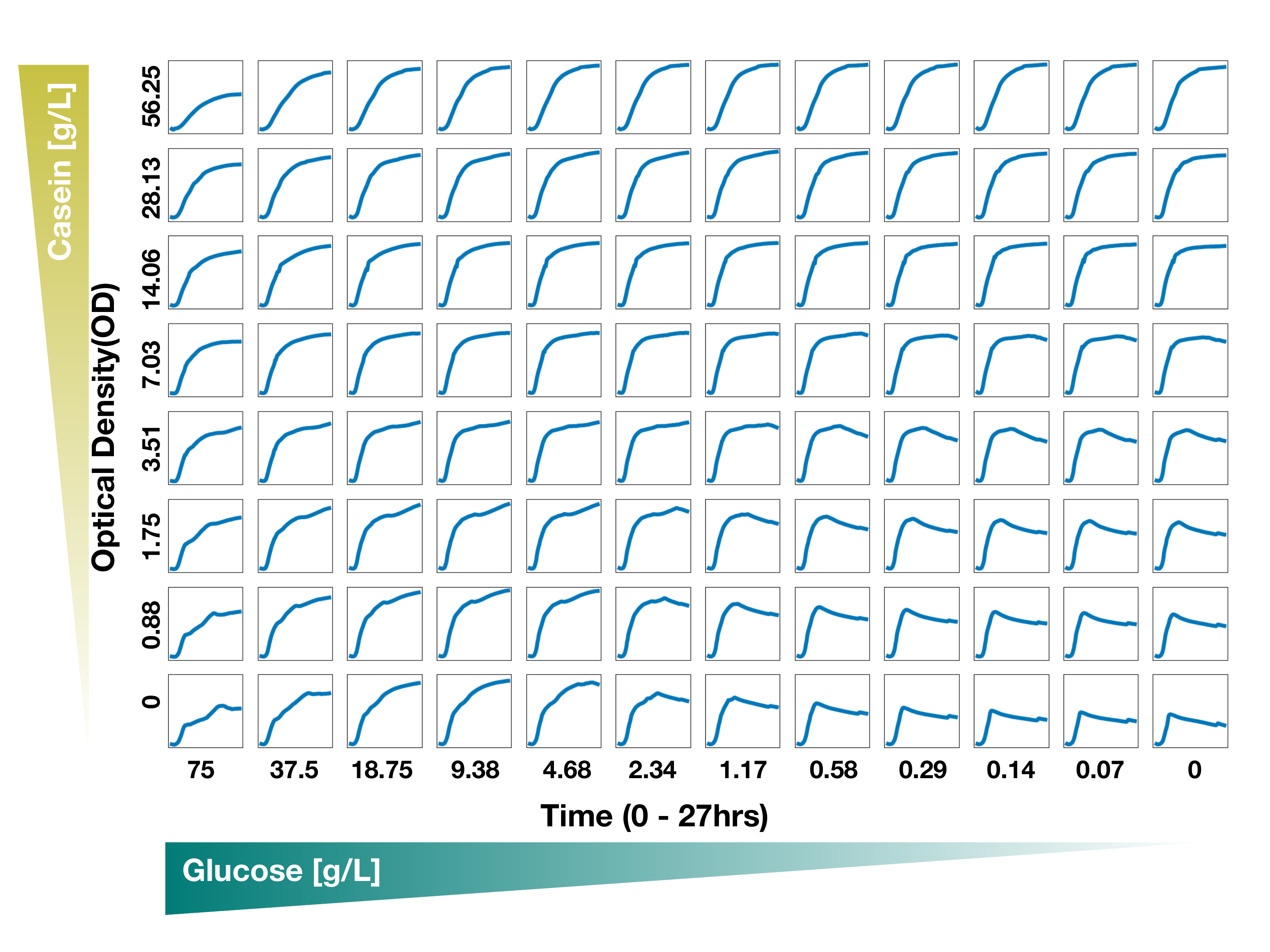}
    \caption{Different initial conditions of substrates obtained by two dimensional serial dilution of casein and glucose and the corresponding growth curves are obtained for a period of 27 hours.}
    \label{fig:96well}
\end{figure}

\textbf{Data Collection:}
The microplate reader was set to $30^o$C and the shaker to 807 cycles per minute, with continuous double orbital mixing. The absorbance at 600 nanometers(nm), which is termed as the Optical Density at 600 nm ($OD_{600}$), was measured as a function of time for 27 hours. We assume in this work, as is widely accepted, that $OD_{600}$ measurements were collected in a linear regime, where cell population is proportional OD600 measurements. The obtained data along with the varying substrate concentrations are shown in Figure. \ref{fig:96well}.

\section{Growth Curve Dynamics Model}\label{sec: Mathematical Model}
The dynamics of the bacterial cell growth can be represented by  
\begin{align}\label{eq: general nonlinear growth curve model}
    \begin{bmatrix}
        N^{(b)}_{k+1}\\
        C_{k+1}\\
        G_{k+1}\ 
    \end{bmatrix}
    =f(N^{(b)}_{k},C_k,G_k)
\end{align}
where the bacterial cell count ($N^{(b)}$), casein substrate concentration ($C$) and glucose substrate concentration ($G$) are the states of the system, $f$ is the nonlinear dynamics and $k$ is the discrete time index. We measure the $OD_{600}$ data as mentioned in section \ref{sec:ExptSetup} and the output equation is given by
\begin{align}\label{eq: output OD600}
    y_k=h(N_k^{(b)})
\end{align}
as $OD_{600}$ is a function of only the number of cells.
Some of the existing empirical nonlinear models for growth curve dynamics include the Monod's model \cite{monod1949growth} which uses a single substrate to form the foundation of the growth curve dynamics and in \cite{brandt2003general} and \cite{kompala1986investigation} multiple substrates are incorporated. Monod's model is a two-state nonlinear dynamical system comprising of the substrate (S) and the number of bacteria ($N^{(b)}$):
\begin{align}
    \dot{N}^{(b)}(t)&=r_{max}\frac{S(t)N^{(b)}(t)}{K_s + S(t)}\nonumber \\
    \dot{S}&=-\gamma\dot{N}^{(b)}\
\end{align}
where $r_{max}$ is the maximum growth rate and $K_s$ is the half velocity constant. As $N^{(b)}$ is the only variable of measurement in (\ref{eq: output OD600}), we convert the model to a single differential equation containing only $N^{(b)}$ 
\begin{equation}\label{eq: Monod's model Differential Equation form}
    \begin{split}
        \Ddot{N}^{(b)}(t)&=\frac{1}{r_{max}K_s N^{(b)}}(K_s r_{max} \dot{N}^{(b)^2}- \gamma \dot{N}^{(b)^3} \\
        & + 2\gamma r_{max} N^{(b)} \dot{N}^{(b)^2} -\gamma r_{max}^2N^{(b)^2}\dot{N}^{(b)} )
    \end{split}
\end{equation}
The existing models though heuristic, suggest that $N^{(b)}$ at any point in time is a function of the past
\begin{align}
    N^{(b)}_{k+1}=f(N^{(b)}_k,N^{(b)}_{k-1},\cdots) \nonumber
\end{align} $N^{(b)}_k$ to be a function of its finite past. This is the general structure of the discrete nonlinear autoregressive (NAR) model given by
\begin{equation}\label{eq: NAR model}
    \begin{split}
        y_{k} &= f(y_{k-1},y_{k-2},\cdots,y_{k-\tau})\\
        y_{i} &\in \mathbb{R}^p \quad \forall i \in \mathbb{Z}_{>0}\\
        f&:\underbrace{\mathbb{R}^p \times \mathbb{R}^p \times \cdots \times \mathbb{R}^p}_{\tau \text{ times}}\rightarrow \mathbb{R}^p
    \end{split}
\end{equation}
where the current output is a function of the past $\tau$ outputs.

\section{Hankel dynamic mode decomposition}\label{sec: Hankel DMD}
Given the nonlinear system (\ref{eq: general nonlinear growth curve model}) with the state measurement given by (\ref{eq: output OD600}) and modeled by the discrete time difference equation (\ref{eq: NAR model}), Hankel DMD \cite{arbabi2017ergodic} is a suitable algorithm to solve the model identification problem with the NAR  structure. The promising feature of using a DMD algorithm is that it identifies a linear state space representation which has a theoretical foundation in Koopman operator theory.

Given the autonomous state space system
\begin{equation}
\begin{split}
    \Tilde{x}_{k+1} &= \Tilde{f}(\Tilde{x}_k)\\
    y_{k} &= h(\Tilde{x}_k)
\end{split}
\end{equation}
where $x_k \in \mathbb{R}^n$ is the state, $\Tilde{f}:\mathbb{R}^n\rightarrow \mathbb{R}^n$ is the dynamics, $y_k \in \mathbb{R}^p$ is the output and $h:\mathbb{R}^n\rightarrow \mathbb{R}^p$ is a nonlinear function that maps the state directly to itself, i.e. $x$ is identical to the output $y$, Hankel DMD constructs a Koopman model of the form
\begin{equation}
    \begin{bmatrix}
    \psi(y_{k+1})  \\ \psi(y_{k+2}) \\ \vdots \\ \psi(y_{k+\tau})
    \end{bmatrix}
    =K\begin{bmatrix}
    \psi(y_{k}) \\ \psi(y_{k+1}) \\ \vdots \\ \psi(y_{k+\tau-1})
    \end{bmatrix}
\end{equation}
such that $\psi:\mathbb{R}^p \rightarrow \mathbb{R}^{N_p}$ is the dictionary of state inclusive observables of the state $\Tilde{x}_k$ constructed by a nonlinear transformation of the corresponding output $y_k$ and $K$ is the Koopman operator.   Regardless of full-state measurements, we nonetheless cast Hankel DMD in this form to compare it with our subsequent causal jump DMD algorithm. 

Given the output measurements $\{y_1,y_2,..,y_{N}\}$, to identify an approximate Koopman operator $K$ using Hankel DMD, the time shifted Hankel matrices are constructed as
\begin{equation}
    \begin{split}
        \Psi(Y_p) &= 
        \begin{bmatrix} 
        \psi(y_{1})&\psi(y_{2})&\hdots & \psi(y_{N-\tau})\\  
        \psi(y_{2})&\psi(y_{3})& \hdots & \psi(y_{N-\tau+1}) \\ 
        \vdots  & \vdots  & \ddots & \vdots \\ 
        \psi(y_{\tau})& \psi(y_{\tau+1})&\hdots & \psi(y_{N-1})   \end{bmatrix} \\
        \Psi(Y_f) &= 
        \begin{bmatrix}
        \psi(y_{2})&\psi(y_{3})&\hdots & \psi(y_{N-\tau+1})\\  
        \psi(y_{3})&\psi(y_{4})& \hdots & \psi(y_{N-\tau+2}) \\ 
        \vdots  & \vdots  & \ddots & \vdots \\ 
        \psi(y_{\tau+1})& \psi(y_{\tau+2})&\hdots & \psi(y_N) 
        \end{bmatrix}
    \end{split}
\end{equation}
and the optimization problem
\begin{equation}\label{eq: Hankel DMD Optimization}
    \min_{K} || \Psi(Z_{f}) - K \Psi(Z_p)|| 
\end{equation}
is solved using the Moore-Penrose pseudoinverse method mentioned in section \ref{sec:Koop}. This yields a solution of the form
\begin{equation}
    \begin{bmatrix}
    \psi(y_{k+1})  \\ \psi(y_{k+2}) \\ \vdots \\ \psi(y_{k+\tau})
    \end{bmatrix}
    =
    \begin{bmatrix}
        0\hspace{-5pt} & \mathbb{I}_{N_p}\hspace{-5pt} & \cdots\hspace{-5pt} & 0 \hspace{-5pt}&0\\ 
        0\hspace{-5pt} & 0\hspace{-5pt} & \cdots\hspace{-5pt} & 0\hspace{-5pt} &0\\ 
        \vdots\hspace{-5pt} & \vdots\hspace{-5pt} & \ddots\hspace{-5pt} &\vdots \hspace{-5pt}& \vdots\\
        0 \hspace{-5pt}& 0\hspace{-5pt} & \cdots \hspace{-5pt}& 0 &\mathbb{I}_{N_p}\\
        k_1\hspace{-5pt} & k_2 \hspace{-5pt} & \cdots \hspace{-5pt} &k_{\tau-1}\hspace{-5pt} & k_\tau\\ 
    \end{bmatrix}
    \begin{bmatrix}
    \psi(y_{k}) \\ \psi(y_{k+1}) \\ \vdots \\ \psi(y_{k+\tau-1})
    \end{bmatrix}\nonumber
\end{equation}
Other than the last $N_p$ equations, the others are trivial. To construct an output predictor, we take the component $y_{k+\tau}$ of $\psi(y_{k+\tau})$ to get
\begin{equation}\label{eq: Hankel DMD predictor}
    y_{k+\tau} = \Tilde{k}_1\psi(y_k) + \Tilde{k}_2\psi(y_{k+1}) +\cdots + \Tilde{k}_\tau\psi(y_{k+\tau +1})
\end{equation}
where $\Tilde{k}_i$ are the components of $k_i$ that map $\psi(y_{k+\tau-1})$ to $y_{k+\tau}$. More generally, this yields a nonlinear equation of the form 
\begin{equation}
    y_k = \Tilde{f}_1(y_{k-1}) + \Tilde{f}_2(y_{k-2}) + \cdots + \Tilde{f}_\tau(y_{k-\tau})
\end{equation}
where the functions $\Tilde{f}_1,\Tilde{f}_2,\cdots,\Tilde{f}_\tau$ have the same basis functions with different coefficients. This identifies a constrained NAR model as it imposes an additive structure on the basis of nonlinear models across time.

\section{Dynamic mode decomposition of nonlinear autoregressive models} \label{sec: NAR-DMD}
To identify a Koopman operator for the unconstrained NAR model (\ref{eq: NAR model}), we formulate a state space representation for the NAR model  with full state observation and identify an approximate Koopman operator for that model using the general class of DMD algorithms like extended DMD and deep DMD.

In this methodology, the problem is broken into two pieces —— the system identification aspect where we select the model structure and the dynamic mode decomposition aspect where we have to construct the dictionary of observables. We define a window parameter $\tau\in\mathbb{Z}_{>0}$ indicating how many past output snapshots are used to define a new extended dictionary of monomial observable functions, up to order $n_o\in\mathbb{Z}_{>0}$.  The new $\tau$-dictionary defines a general extended dynamic mode decomposition problem, which we then solve using classical methods.  

We proceed as follows: given the NAR model (\ref{eq: NAR model}) with the system identification parameter $\tau$, we construct a state defined by 
\begin{equation}\label{eq: 1-step Koopman state}
    z_k:=
    \begin{bmatrix}
        y_{k+1} & 
        y_{k+2} &
        \cdots &
        y_{k+\tau}
    \end{bmatrix}^T
\end{equation}
with $z_k \in \mathbb{R}^{p\tau}$.This yields the state space representation
\begin{align}\label{eq: 1-step Koopman model}
        z_{k+1}&=
        \begin{bmatrix}
            y_{k+2} \\ 
            y_{k+3} \\
            \vdots \\
            y_{k+\tau}\\
            y_{k+\tau+1}
        \end{bmatrix} := 
        \begin{bmatrix}
            f_1(y_{k+1},y_{k+2},\cdots,y_{k+\tau}) \\ 
            f_2(y_{k+1},y_{k+2},\cdots,y_{k+\tau}) \\
            \vdots \\
            f_{\tau-1}(y_{k+1},y_{k+2},\cdots,y_{k+\tau}) \\
            f_\tau(y_{k+1},y_{k+2},\cdots,y_{k+\tau}) \\
        \end{bmatrix}\nonumber\\
        &:=\begin{bmatrix}
            y_{k+2}\\ 
            y_{k+3} \\
            \vdots \\
            y_{k+\tau}\\
            f_(y_{k+1},y_{k+2},\cdots,y_{k+\tau})
        \end{bmatrix} = F(z_k)\nonumber\\
        \Rightarrow z_{k+1} &= \Tilde{F}(z_k)
\end{align}
where $\Tilde{F}:\mathbb{R}^{p\tau}\rightarrow\mathbb{R}^{p\tau}$ represents the dynamics of the lifted "state" model. The approximate EDMD model for the full output observable model is given by
\begin{equation}
    \psi(z_{k+1}) = K\psi(z_k)
\end{equation}
where  $\psi(z_k)$ is the state inclusive dictionary of observables defined as
\begin{equation}
    \psi(z_k) = 
    \begin{bmatrix}
        z_k\\
        \varphi(z_k)
    \end{bmatrix}
\end{equation}
with $\varphi:\mathbb{R}^{p\tau}\rightarrow \mathbb{R}^{N_p}$ being a nonlinear transformation that constructs the nonlinear observables. Since the only additional information in the state $z_{k+1}$ when compared to the state $z_k$ is $y_{k+\tau+1}$, the output predictor form for the Koopman model can be identified considering the complete Koopman model and extracting the equation that corresponds to $y_{k+\tau+1}$ given by
\begin{align}\label{eq: 1-step Koopman predictor}
    \psi(z_{k+1}) &= K \psi(z_{k})\nonumber\\
    \Rightarrow 
    \begin{bmatrix}
        y_{k+2}\\ 
        \vdots \\
        y_{k+\tau} \\
        y_{k+\tau+1}\\
        \varphi(z_{k+1})
    \end{bmatrix} &= 
    \begin{bmatrix}
        \bullet & \cdots & \bullet & \bullet & \bullet\\
        \vdots & \ddots & \vdots &\vdots &\vdots \\
        \bullet & \cdots & \bullet & \bullet & \bullet\\
        k_1 & \cdots & k_{\tau-1} & k_\tau & k_{11}\\
        \bullet & \cdots & \bullet & \bullet & \bullet\\
    \end{bmatrix}
    \begin{bmatrix}
        y_{k+1}\\ 
        \vdots \\
        y_{k+\tau-1} \\
        y_{k+\tau}\\
        \varphi(z_{k})
    \end{bmatrix}\nonumber\\
    \Rightarrow y_{k+\tau+1} &= k_1y_{k+1} + \cdots +  k_\tau y_{k+\tau}\nonumber\\
    &+k^T_{11}\varphi(y_{k+1}, \cdots, y_{k+\tau})
\end{align}
The output predictor form keeps the general structure of the NAR model intact as opposed to the predictor identified by Hankel DMD which identified a constrained model. But, the issue with this model is the causality. It can be seen from (\ref{eq: 1-step Koopman predictor}) that the Koopman model is non causal due to the overlap of outputs $y_k$ between the states $z_{k+1}$ and $z_k$. This identifies models that use future outputs to predict past outputs which are inadmissible as our system is causal. To identify a causal model, the property of (\ref{eq: 1-step Koopman model}) proved in proposition 1 is very important.

\begin{proposition}
Given the state space model (\ref{eq: 1-step Koopman model}) for the nonlinear autoregressive (NAR) model (\ref{eq: NAR model}), if the state is propagated $i$ time steps where $i\in\{1,2,...,\tau\}$
 \begin{equation*}
    \begin{split}
        z_{k+i}&=\Tilde{F}^i(z_k)=\underbrace{\Tilde{F} \circ \Tilde{F} \circ \cdots \circ \Tilde{F}}_{\text{i times}}(z_k),
    \end{split}
 \end{equation*}
 then the last $i$ functions of $\Tilde{F}_H^i(z_k)$ are such that
 \begin{equation}\label{eq: lemma1 statement}
     \begin{split}
        (\Tilde{F}^i)^{(\tau-i+j)}(z_k) &= f^{(j)}(z_k) \quad j\in\{1,2,\hdots,i\}\\
        y_{y+\tau+j} = f^{(j)}(z_k) &= f^{(j)}(y_{k+1},y_{k+2},\hdots,y_{k+\tau}) 
     \end{split}
 \end{equation}
 where $(\Tilde{F}^i)^{(b)}(z_k)$ corresponds to the $b^{th}$ function of $\Tilde{F}^i(z_k)$ and $f^{(j)}(z_k)$ is the $j$-step predictor of the NAR model (\ref{eq: NAR model}).
\end{proposition}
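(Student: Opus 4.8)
The plan is to prove the statement by induction on $i$. First, though, it is worth making explicit the recursive definition of the $j$-step predictor $f^{(j)}$ that (\ref{eq: lemma1 statement}) refers to, since the paper states it only verbally: set $f^{(1)} := f$, and for $j \ge 2$ let $f^{(j)}(z_k)$ be the value obtained by evaluating the NAR map $f$ on the length-$\tau$ window of $y$'s with indices $k+j,\ldots,k+\tau+j-1$, in which every entry $y_{k+\tau+\ell}$ (with $\ell\ge 1$) is replaced by the already-defined lower-order predictor $f^{(\ell)}(z_k)$ and every entry $y_{k+m}$ with $m\le\tau$ is the block $(z_k)^{(m)}$. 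With this convention $f^{(j)}(y_{k+1},\ldots,y_{k+\tau}) = y_{k+\tau+j}$ by construction; together with $z_k = (y_{k+1},\ldots,y_{k+\tau})$ from (\ref{eq: 1-step Koopman state}), this is exactly the second line of (\ref{eq: lemma1 statement}), so the substantive claim is the first line.

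It is cleanest to carry along a slightly stronger induction hypothesis that tracks all $\tau$ block components of $z_{k+i}$, not only the last $i$: for every $i\in\{0,1,\ldots,\tau\}$ and every $b\in\{1,\ldots,\tau\}$,
\begin{equation*}
(\tilde{F}^{i})^{(b)}(z_k) =
\begin{cases}
(z_k)^{(b+i)}, & b+i\le\tau,\\
f^{(\,b+i-\tau\,)}(z_k), & b+i>\tau,
\end{cases}
\end{equation*}
which in either case equals $y_{k+b+i}$. Taking $b = \tau-i+j$ with $j\in\{1,\ldots,i\}$ gives $b+i = \tau+j > \tau$, hence $(\tilde{F}^{i})^{(\tau-i+j)}(z_k) = f^{(j)}(z_k)$, which is precisely the first line of (\ref{eq: lemma1 statement}).

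The base case $i=0$ is trivial and $i=1$ is read directly off (\ref{eq: 1-step Koopman model}): $\tilde{F}$ acts as a block down-shift on components $1,\ldots,\tau-1$ and writes $f(z_k) = f^{(1)}(z_k)$ into component $\tau$. For the inductive step, assume the hypothesis for $i$ and let $i+1\le\tau$; write $z_{k+i+1} = \tilde{F}(z_{k+i})$. Using the block-shift part of $\tilde{F}$, for $b\le\tau-1$ we get $(\tilde{F}^{i+1})^{(b)}(z_k) = (\tilde{F}^{i})^{(b+1)}(z_k)$, and applying the hypothesis at position $b+1$ (with $b+1+i = b+i+1$) reproduces exactly the claimed formula for index $i+1$. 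For the last component $b=\tau$, the $\tau$-th component map of $\tilde{F}$ (which is $f$ evaluated on the whole argument) gives $(\tilde{F}^{i+1})^{(\tau)}(z_k) = f\big((\tilde{F}^{i})^{(1)}(z_k),\ldots,(\tilde{F}^{i})^{(\tau)}(z_k)\big)$; by the hypothesis the arguments are $(z_k)^{(i+1)},\ldots,(z_k)^{(\tau)}$ followed by $f^{(1)}(z_k),\ldots,f^{(i)}(z_k)$, i.e.\ the window of $y$'s with indices $k+i+1,\ldots,k+\tau+i$ with the prescribed predictor substitutions. By the recursive definition above this is exactly $f^{(i+1)}(z_k)$, matching the claimed formula at $b=\tau$ since $\tau+(i+1) > \tau$. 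This closes the induction.

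I anticipate that the main obstacle is not conceptual but bookkeeping: writing the recursion for $f^{(j)}$ precisely enough that the identification $f(z_{k+i}) = f^{(i+1)}(z_k)$ in the last-component case is unambiguous, and keeping the index ranges of the windows straight — in particular, verifying that for $i+1\le\tau$ the window defining $f^{(i+1)}$ still overlaps the original data $z_k$, so that the ``original-block versus predictor-value'' partition of $z_{k+i}$ coincides with the one used in the definition. The hypothesis $i\le\tau$ is used only to make ``the last $i$ functions'' meaningful and to fix this partition; the argument is otherwise insensitive to the ordering convention used for the arguments of $f$.
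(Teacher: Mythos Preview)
Your proposal is correct and follows essentially the same inductive argument as the paper: both prove the claim by induction on $i$, using that $\tilde{F}$ is a block down-shift with $f$ in the last component, and both identify the last entry of $z_{k+i+1}$ as $f$ applied to the window $(y_{k+i+1},\ldots,y_{k+\tau},f^{(1)}(z_k),\ldots,f^{(i)}(z_k))$, which is precisely $f^{(i+1)}(z_k)$. The only difference is cosmetic: you carry the slightly stronger hypothesis on all $\tau$ components and make the recursive definition of $f^{(j)}$ explicit, whereas the paper leaves the first $\tau-i$ components implicit and characterizes $f^{(j)}$ only by the property $y_{k+\tau+j}=f^{(j)}(z_k)$.
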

\begin{proof}
    Given the state $z_k$ defined in (\ref{eq: 1-step Koopman state}), the state propagated $i$ time steps $\forall i \in \mathbb{Z}_{\geq0}$ is given by
    \begin{equation*}
    z_{k+i} = 
        \begin{bmatrix} 
            y_{k+i+1} & y_{k+i+2} & \cdots & y_{k+i+\tau} 
        \end{bmatrix}^T
    \end{equation*} 
    and the $m^{th}$ component of $z_{k+i}$ is given by $z_{k+i}^{(m)} =  y_{k+i+m}$ where $m \in \{1,2,...,\tau\}$. 
    
    A function $f^{(j)}: \underbrace{\mathbb{R}^p \times \mathbb{R}^p \times \cdots \times \mathbb{R}^p}_{\tau \text{ times}} \rightarrow \mathbb{R}^{p} $  is a j-step predictor of the NAR model (\ref{eq: NAR model}) if it has the following form
    \begin{equation*}
        \begin{split}
            y_{k+\tau+j} &= f^{(j)}(x_k)=f^{(j)}(y_{k+1},y_{k+2},\cdots,y_{k+\tau}).
        \end{split}
    \end{equation*}

    Now that we have the state definitions and the predictor function definitions in place, we prove (\ref{eq: lemma1 statement}) by induction. For $i=1$, 
    \begin{equation*}
        \begin{split}
            z_{k+1} &= \Tilde{F}^{(1)}(z_{k})\\
            (\Tilde{F}^1)^{(\tau - i +j)} (z_{k})&= (\Tilde{F}^1)^{(\tau)}(z_{k}) = f^{(1)}(x_k)\quad j \in \{1\}\\
            \Rightarrow z_{k+1}^{(\tau)} &= y_{k+\tau+1} = f^{(1)}(z_k)
        \end{split}
    \end{equation*} 
    Hence (\ref{eq: lemma1 statement}) is satisfied for $i=1$. We assume the result is true for $i=p$. This yields
    \begin{equation*}
        \begin{split}
            (\Tilde{F}^p(z_k))^{(\tau-p+j)}(z_{k}) &= f^{(j)}(z_k)  \quad j \in \{1,2,...,p\}\\
            \Rightarrow z_{k+p} = 
            \begin{bmatrix}
                \vspace{-2pt}y_{k+p+1}\\
                \vspace{-2pt}\vdots\\
                y_{k+\tau}\\
                \vspace{-2pt}y_{k+\tau+1}\\
                \vspace{-2pt}\vdots\\
                y_{k+\tau+p}
            \end{bmatrix}
            &=\Tilde{F}^p(z_k) = 
            \begin{bmatrix}
                \vspace{-2pt}y_{k+p+1}\\
                \vspace{-2pt}\vdots\\
                y_{k+\tau}\\
                \vspace{-2pt}f^{(1)}(z_k)\\
                \vspace{-2pt}\vdots\\
                f^{(p)}(z_k)
            \end{bmatrix}.
        \end{split}
    \end{equation*}
    For $i=p+1$, the state $z_{k+p+1}$ becomes
    \begin{equation*}
        \begin{split}
            z_{k+p+1} &=\Tilde{F}^{p+1}(z_k) = \Tilde{F} \circ \Tilde{F}^p(z_k)\\
             \Rightarrow 
             \begin{bmatrix}
                \vspace{-2pt}y_{k+p+2}\hspace{-6pt}\\
                \vspace{-2pt}\vdots\hspace{-6pt}\\
                y_{k+\tau}\hspace{-4pt}\\
                \vspace{-2pt}y_{k+\tau+1}\hspace{-6pt}\\
                \vspace{-2pt}\vdots\hspace{-6pt}\\
                y_{k+\tau+p}\\
                y_{k+\tau+p+1}
            \end{bmatrix}
             &= \Tilde{F}
            \left(
            \begin{bmatrix}
                \vspace{-2pt}y_{k+p+1}\\
                \vspace{-2pt}\vdots\\
                y_{k+\tau}\\
                \vspace{-2pt}f^{(1)}(z_k)\\
                \vspace{-2pt}\vdots\\
                f^{(p-1)}(z_k)\\
                f^{(p)}(z_k)\\
            \end{bmatrix}
            \right)=
            \begin{bmatrix}
                \vspace{-2pt}y_{k+p+2}\\
                \vspace{-2pt}\vdots\\
                y_{k+\tau}\\
                \vspace{-2pt}f^{(1)}(z_k)\\
                \vspace{-2pt}\vdots\\
                f^{(p)}(z_k)\\
                g
            \end{bmatrix}\\
        \end{split}
    \end{equation*}
    where
    \begin{equation*}
    \begin{split}
        g&= f(y_{k+p+1},...,y_{k+\tau},f^{(1)}(z_k),...,f^{(p)}(z_k))\\
        &=f(z_k^{(p+1)},...,z_k^{(\tau)},f^{(1)}(z_k),...,f^{(p)}(z_k))\\
        &:=g(z_k).
    \end{split}
    \end{equation*}
    Since $g$ is a function of only $z_k$ and since $y_{k+\tau+p+1} = g$, $g(z_k)$ satisfies the definition of a predictor function and hence is a $(p+1)$-step predictor of (\ref{eq: NAR model})
    \begin{equation*}
    \begin{split}
        y_{k+\tau+p+1}&=z_{k+p+1}^{(\tau)} = (\Tilde{F}^{p+1}(z_k))^{(\tau)}= f^{(p+1)}(z_k). 
    \end{split}
    \end{equation*}
    Therefore, for $i=p+1$, 
    \begin{equation*}
        (\Tilde{F}^i(z_k))^{(\tau-p-1+j)} = f^{(j)}(z_k) \quad j \in \{1,2,...,(p+1)\}
    \end{equation*}
    stating that the last $(p+1)$ entries of $z_{k+p+1}$ are $f^{(1)}(x)$, $f^{(2)}(x)$, ..., $f^{(p+1)}(x)$. 
    Hence the proof. 
\end{proof} 
 
 To identify a causal Koopman model for the NAR system (\ref{eq: NAR model}), we propagate the model (\ref{eq: 1-step Koopman model}) by $\tau$ time steps to ensure no intersection of outputs between the states $z_{k+1}$ and $z_k$. We define a new state $x_k = z_{k\tau}$ which yields
\begin{align}
    x_k &= z_{k\tau}\nonumber\\
    \Rightarrow x_{k+1} &= z_{k\tau +\tau} = \Tilde{F}^\tau(z_{k\tau}) = F(x_k)\\
    \text{where } F&= \Tilde{F}^\tau  =  \underbrace{\Tilde{F} \circ \Tilde{F} \circ \cdots \Tilde{F}}_{\tau \text{ times}} \nonumber
\end{align}
Using proposition 1, we can say that the nonlinear state space model contains functions that are 1-step, 2-step, ..., $\tau$-step predictors in the following form

\begin{align}\label{eq: tau-jump Koopman model}
        x_{k+1}&=
        \begin{bmatrix}
            y_{k\tau+\tau+1} \\ 
            y_{k\tau +\tau+2} \\
            \vdots \\
            y_{k\tau+2\tau-1}\\
            y_{k\tau+2\tau}
        \end{bmatrix} := 
        \begin{bmatrix}
            f_1(y_{k\tau+1},y_{k\tau+2},\cdots,y_{k\tau+\tau}) \\ 
            f_2(y_{k\tau+1},y_{k\tau+2},\cdots,y_{k\tau+\tau}) \\
            \vdots \\
            f_{\tau-1}(y_{k\tau+1},y_{k\tau+2},\cdots,y_{k\tau+\tau}) \\
            f_\tau(y_{k\tau+1},y_{k\tau+2},\cdots,y_{k\tau+\tau}) \\
        \end{bmatrix}\nonumber\\
        &:=\begin{bmatrix}
            f^{(1)}(y_{k\tau+1},y_{k\tau+2},\cdots,y_{k\tau+\tau}) \\ 
            f^{(2)}(y_{k\tau+1},y_{k\tau+2},\cdots,y_{k\tau+\tau}) \\
            \vdots \\
            f^{(\tau-1)}(y_{k\tau+1},y_{k\tau+2},\cdots,y_{k\tau+\tau}) \\
            f^{(\tau)}(y_{k\tau+1},y_{k\tau+2},\cdots,y_{k\tau+\tau})
        \end{bmatrix} = F(x_k)\nonumber\\
\end{align}
where $f^{(i)}$ is the $i$-step predictor of the NAR model. We prove the existence of a Koopman operator for this model in Proposition 2.

\begin{proposition}
If the function $f(x)$ in the NAR model (\ref{eq: NAR model}) is analytic, then a Koopman operator exists for (\ref{eq: 1-step Koopman model}) and (\ref{eq: tau-jump Koopman model}).
\end{proposition}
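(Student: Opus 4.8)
The plan is to reduce the statement to the existence result quoted in Section~\ref{sec:Koop}, namely that an autonomous discrete-time system $x_{k+1}=f(x_k)$ with $f$ analytic admits a Koopman operator acting on an invariant function space. Given that, it suffices to prove that the two lifted dynamics maps --- $\Tilde{F}$ in (\ref{eq: 1-step Koopman model}) and $F=\Tilde{F}^{\tau}$ in (\ref{eq: tau-jump Koopman model}) --- are real-analytic as maps $\mathbb{R}^{p\tau}\to\mathbb{R}^{p\tau}$ whenever the right-hand side $f$ of the NAR model (\ref{eq: NAR model}) is analytic.

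First I would handle the $1$-step model. Writing $\Tilde{F}$ componentwise, the first $\tau-1$ blocks act as $z_k\mapsto z_k^{(m+1)}=y_{k+m+1}$, i.e.\ coordinate projections, which are linear and hence real-analytic; the last block is $z_k\mapsto f(y_{k+1},\dots,y_{k+\tau})=f(z_k)$, which is analytic by hypothesis. Since a vector-valued map is real-analytic precisely when each of its components is, $\Tilde{F}$ is analytic, and the cited existence result immediately yields a Koopman operator for (\ref{eq: 1-step Koopman model}).

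Next I would treat the $\tau$-jump model. By definition $F=\Tilde{F}\circ\Tilde{F}\circ\cdots\circ\Tilde{F}$ ($\tau$ times), and a finite composition of real-analytic maps is real-analytic, so $F$ is analytic and the same existence result gives a Koopman operator for (\ref{eq: tau-jump Koopman model}). If a more explicit argument is preferred, one can instead invoke Proposition~1: it identifies the components of $F$ with the $i$-step predictors $f^{(i)}$, $i=1,\dots,\tau$, and the induction in its proof builds each $f^{(j+1)}$ as $f$ evaluated on a tuple assembled from the coordinates $z_k^{(j+1)},\dots,z_k^{(\tau)}$ together with the already-constructed $f^{(1)},\dots,f^{(j)}$; starting from the analytic $f$, every such step is a composition of analytic maps, so each $f^{(i)}$ is analytic, hence so is $F$.

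The only subtlety I anticipate is bookkeeping rather than a genuine obstacle: one should confirm that analyticity is preserved \emph{globally} on $\mathbb{R}^{p\tau}$ (not merely locally) under these shift-and-compose operations --- which follows since projections are entire and compositions and finite Cartesian products of real-analytic maps are real-analytic --- and that the function space on which the Koopman operator is declared (the state-inclusive dictionary used in the EDMD construction) is consistent with the hypotheses of the cited existence theorem. Assembling these standard facts completes the argument.
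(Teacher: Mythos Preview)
Your proposal is correct and follows essentially the same route as the paper: show that $\Tilde{F}$ is analytic because its components are either coordinate projections or $f$ itself, deduce that $F=\Tilde{F}^{\tau}$ is analytic as a finite composition, and then invoke the cited existence result for analytic discrete-time maps. The paper adds one supplementary constructive argument you omit---if $K$ is the Koopman operator for the $1$-step model then $\mathrm{diag}(K^{\tau},\dots,K^{\tau})$ serves for the $\tau$-jump model---but your analyticity argument alone already suffices for the stated conclusion.
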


\begin{proof}
Since $f$ in (\ref{eq: NAR model}) is analytic, $\Tilde{F}$ in (\ref{eq: 1-step Koopman model}) is analytic since all the entries of $\Tilde{F}$ are either linear functions or are equal to $f$.
Since $F$ is obtained by the composition of $\Tilde{F}$ $\tau$ times, $F$ is also analytic. 

$F(x)$ admits a countable-dimension Koopman operator $K_x$, with an invariant subspace isomorphic to either a finite or an infinite Taylor polynomial basis \cite{yeung2018koopman}.  Moreover, isomorphism with a Taylor polynomial basis ensures that the Koopman observable space contains the full state observable, i.e. it is state inclusive. 

There are two easy arguments to conclude the proof.  First, note that since $f$ is analytic, $f^\tau$ is analytic and thus by the same reasoning as in \cite{yeung2018koopman}, $f^\tau$ thus must admit a Koopman operator.  The second argument is a constructive one, noting that equation
\begin{equation}
\psi(x[(k)\tau ]) = K^\tau \psi(x[(k-1)(\tau))
\end{equation}
must hold due to $\tau$ applications of the 1-step Koopman equation.  This means therefore that the following {\it matrix} equation must hold
\begin{equation}
\psi\left(\begin{bmatrix}  x[(k)\tau ] \\ x[k\tau+1] \\ \vdots \\ x[(k+1)\tau-1]    
\end{bmatrix}\right)= {\bf K}_J\psi\left(\begin{bmatrix}  (x[(k-1)(\tau)) ] \\ (x[(k-1)]\tau +1]) \\ \vdots\\ (x[(k)\tau-1)]    
\end{bmatrix}\right)
\end{equation}
where ${\bf K}_J = \text{diag}\left( K^\tau , K^\tau , \hdots  K^\tau \right).$ This concludes the proof.
\end{proof}

Since the existence of a Koopman operator has been proved for the model (\ref{eq: tau-jump Koopman model}) in Proposition 2, we construct a state inclusive dictionary of observables
\begin{equation}
    \psi(x_k) = 
    \begin{bmatrix}
        x_k\\
        \varphi(x_k)
    \end{bmatrix}
\end{equation}
with $\varphi:\mathbb{R}^{p\tau}\rightarrow\mathbb{R}^{N_p}$ to define a Koopman model
\begin{equation}
    \psi(x_{k+1}) = K\psi(x_k)
\end{equation}
This Koopman model is causal since there is no intersection of outputs between $x_{k+1}$ and $x_k$. The added feature of this model is that the DMD algorithm while identifying a Koopman operator, also simultaneously minimizes the 1-step, 2-step, ..., $\tau$-step prediction error of the NAR model. 

Now that we have a theoretical state space representation of a NAR model and established the conditions under which a Koopman operator exists, we turn our attention to the algorithm for identification of the Koopman operator. Given the data with M data sets and N data points in each data set $\{y^{(i)}_1,y^{(i)}_2,...,y^{(i)}_N\}$ where $i \in \{1,2,...M\}$ is the index of the data set, we construct the Hankel states $z_k$ and the dictionary of observables allowing the intermixing of states. We compile the observables into snapshot matrices $\Tilde{\Psi}_f(z)$  and $\Tilde{\Psi}_p(z)$ with a $\tau$ time step jump and solve the Koopman learning problem
\begin{equation*}
    ||\Tilde{\Psi}_f(z)-K\Tilde{\Psi}_p(z)||_F
\end{equation*}
using the methodology in Algorithm 1.
 
\begin{algorithm}
    \label{Algo: CJ DMD}
	\caption{Extended DMD for NAR models} 
	\begin{algorithmic}[1]
	    \State Get NAR model parameter $\tau$
	    \State Get extended DMD parameter $n_o$ for monomial observables
		\For {dataset $i =1,2,\ldots,M$}
			\For {time index $j=1,2,\ldots,N-\tau$}
			    \State Construct the Hankel state
			    \begin{equation*}
			        z^{(i)}_j = \begin{bmatrix}y^{(i)}_{j+1} & y^{(i)}_{j+2} & \cdots y^{(i)}_{j+\tau}\end{bmatrix}
			    \end{equation*}
				\State Construct the dictionary of observables
				$\psi(z^{(i)}_j)$
			\EndFor
			\State Construct the snapshot matrices for each data set with the $\tau$-jump
			\begin{equation*}
			    \begin{split}
			        \Psi^{(i)}_p(x)&=
                \begin{bmatrix}
                    \psi(z^{(i)}_1)&\psi(z^{(i)}_2)&...&\psi(z^{(i)}_{N-2\tau})
                \end{bmatrix}\\
                \Psi^{(i)}_f(x)&=
                \begin{bmatrix}
                    \psi(z^{(i)}_{1+\tau})&\psi(z^{(i)}_{2+\tau})&...&\psi(z^{(i)}_{N-\tau})
                \end{bmatrix}\\
			    \end{split}
			\end{equation*}
		\EndFor
		\State Compile the snapshot matrices across data sets
		\begin{equation*}
		        \begin{split}
                    \Tilde{\Psi}_p(x)=
                    \begin{bmatrix}
                        \Psi^{(1)}_p(x)&\Psi_p^{(2)}(x)&...&\Psi_p^{(M)}(x)
                    \end{bmatrix}\nonumber\\
                    \Tilde{\Psi}_f(x)=
                    \begin{bmatrix}
                        \Psi_f^{(1)}(x)&\Psi_f^{(2)}(x)&...&\Psi_f^{(M)}(x)
                    \end{bmatrix}\
                \end{split}
		\end{equation*}
		\State Compute the SVD of $\Tilde{\Psi}_p(x)= USV^*$
        \State Truncate to required number of singular values and identify the Koopman operator
        \begin{equation*}
            \hat{K} = \Tilde{\Psi}_f(x)\Tilde{V}\Tilde{S}^{-1}\Tilde{U}^*
        \end{equation*}
	\end{algorithmic} 
\end{algorithm}

\section{Results}\label{sec: Results}
From the data-sets obtained in the plate reader experiments shown in Fig. \ref{fig:96well}, we used Algorithm 1 to implement extended DMD using monomials as the dictionary of observables
\begin{multline}
\psi(z_k)=
    [
        y_{k+1},...,y_{k+\tau},y^2_{k+1},y_{k+1}y_{k+2},...,y^2_{k+\tau}\\y^3_{k+1},y^2_{k+1}y_{k+2},...] ^T . \nonumber\ 
\end{multline}
to identify an approximate Koopman operator for the state space model (\ref{eq: tau-jump Koopman model}) as a solution to the identification of the NAR model (\ref{eq: NAR model}). 

We use all the datasets in Fig. \ref{fig:96well} to find a Koopman operator invariant to the substrate concentrations. They are broken equally into training, validation and test set. Given the two parameters $\tau$ (NAR model parameter) and $n_o$ (extended DMD parameter), we can find the optimal approximate Koopman operator by cumulatively iterating through the principal components and evaluating the summation of the mean squared error(MSE) of training and validation data. The number of principal components corresponding to the minimum MSE yields the optimal Koopman operator for a given $\tau$ and $n_o$.  We then iterate through the two parameters to find the optimal model that minimizes the

 By choosing $\tau=9$ and keeping the maximum order of monomials to 3, the Koopman operator has been identified and the prediction on the training data is shown in Figure \ref{fig:Dyntrain} and it has an MSE of 3.4\%. The identified Koopman operator has an MSE of 9\% and the fit is shown in Figure \ref{fig:Dyntest}. 
\begin{figure}[h]
  \centering
  \includegraphics[width=1\columnwidth]{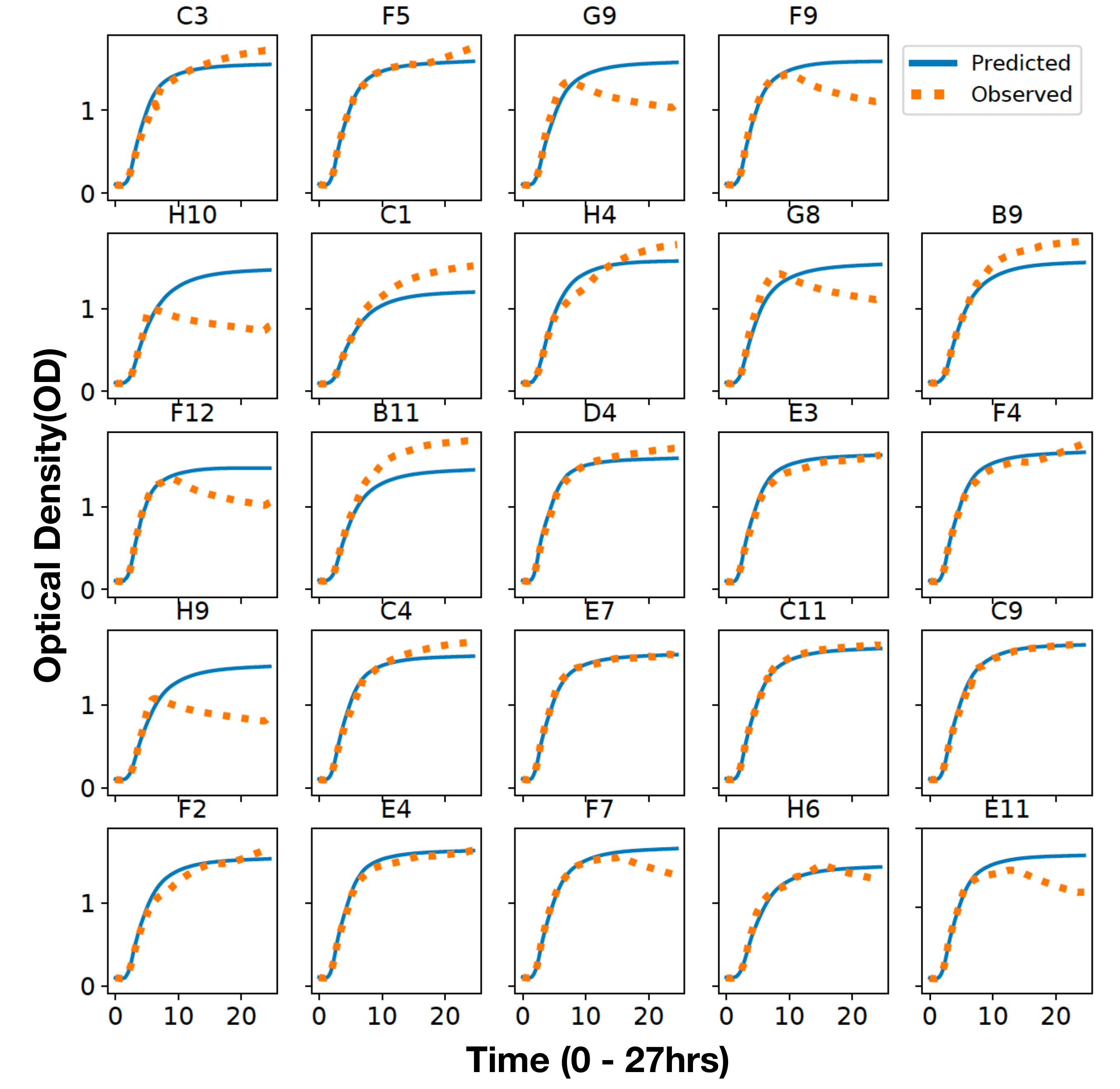}
    \caption{The identified Koopman operator is tested on the training sets with 9 point initial condition and up to $3^{rd}$ order monomials to get a MSE of 3.4\%}
    \label{fig:Dyntrain}
\end{figure}
\begin{figure}[h]
  \centering
  \includegraphics[width=1\columnwidth]{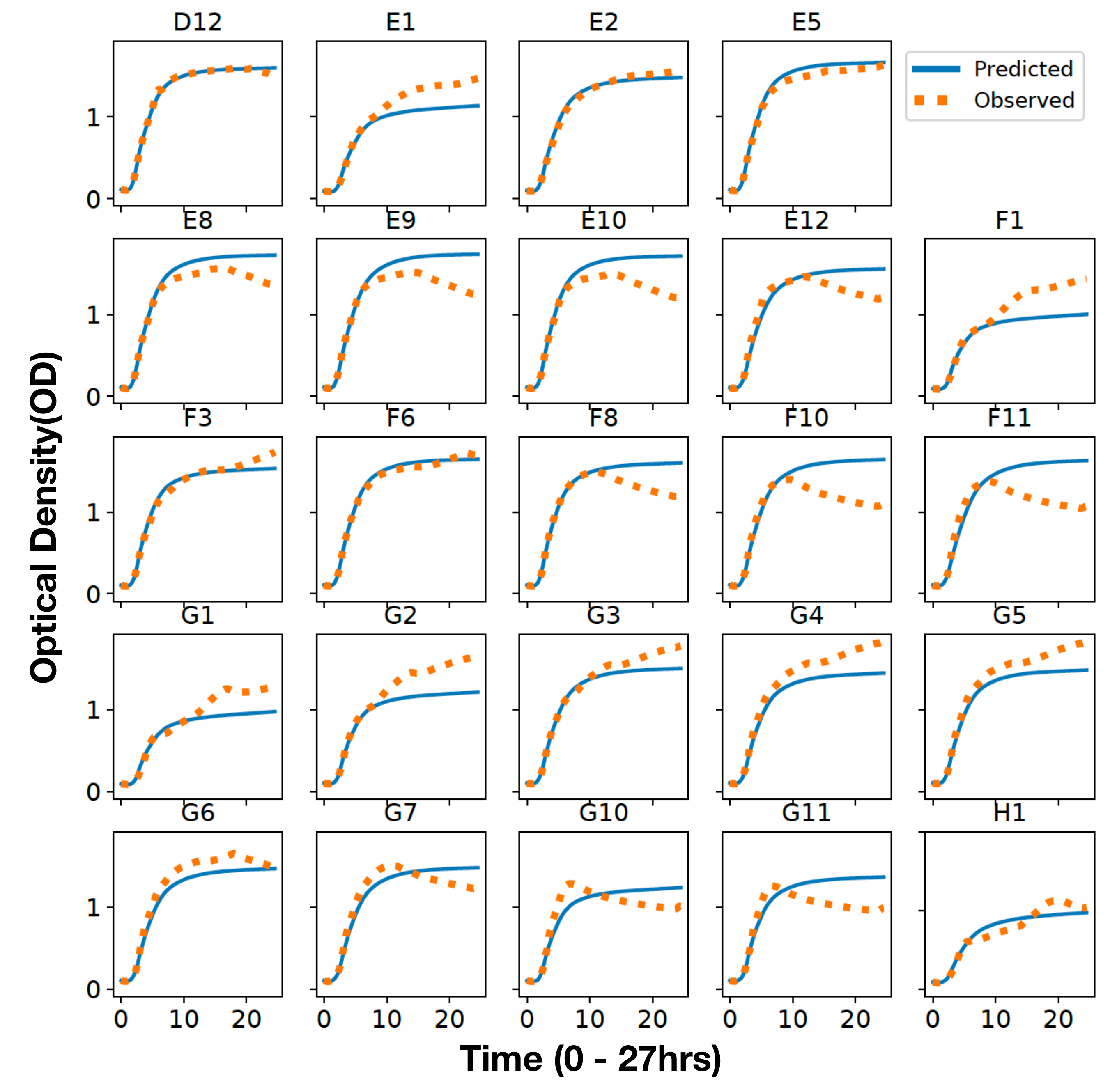}
    \caption{The identified Koopman operator is tested on the test sets by using the initial observables$\psi(x_0)$ and the mean squared error remains the same as that of the training set.}
    \label{fig:Dyntest}
\end{figure}

The results on the experimental data suggest that Causal Jump DMD is a suitable candidate algorithm for identifying the Koopman operator of the population growth dynamics of bacteria and can also be extended in general to identify Koopman operators for NAR models.

\section{Conclusion}\label {conclusion}
In this paper, we introduced the microbial growth curve dynamics to motivate the usage of DMD algorithms to identify Koopman operators for NAR models. We formulated Hankel DMD as a state space representations of the NAR model and showed that it is restrictive in its structure. We construct a causal state space model for the NAR model and identify a Koopman operator for it using extended dynamic mode decomposition with a monomial dictionary of observables. We showed that it does a good job in predicting the population growth dynamics of \textit{Pseudomonas putida} invariant to substrate concentrations. The future goals of this work is to use this model to identify the optimal media conditions for maximal and minimal growth of the microbe thereby enabling us to develop a general methodology to develop an external growth harness for microbes for dynamic growth control. To achieve this, we need to extend the mathematical models to allow for inputs and extend the identification to NARX and NARMAX models. Further, if we integrate this framework with deepDMD which aids in finding the observable functions in a parsimonious fashion, it renders a useful tool for identifying high dimensional linear models for nonlinear systems.

\section{Acknowledgments}
The authors gratefully acknowledge the funding of DARPA grants FA8750-17-C-0229, HR001117C0092, HR001117C0094, DEAC0576RL01830. The authors would also like to thank Professors Arun K. Tangirala, Igor Mezic, Milan Korda, Alexandre Mauroy, Nathan Kutz, Steve Haase, John Harer, Devin Strickland, and Eric Klavins for insightful discussions.  
Any opinions, findings, conclusions, or recommendations expressed in this material are those of the authors and do not necessarily reflect the views of the Defense Advanced Research Project Agency, the Department of Defense, or the United States government. This material is based on work supported by DARPA and AFRL under contract numbers FA8750-17-C-0229, HR001117C0092, HR001117C0094, DEAC0576RL01830.

\bibliographystyle{IEEEtran}
\bibliography{references}

\end{document}